
\documentclass[reqno]{amsart}
\usepackage{hyperref}
\usepackage{amssymb}

\AtBeginDocument{{\noindent\small
\emph{Electronic Journal of Differential Equations},
Vol. 2010(2010), No. 95, pp. 1--5.\newline
ISSN: 1072-6691. URL: http://ejde.math.txstate.edu or http://ejde.math.unt.edu
\newline ftp ejde.math.txstate.edu}
\thanks{\copyright 2010 Texas State University - San Marcos.}
\vspace{9mm}}

\begin{document}
\title[\hfilneg EJDE-2010/95\hfil Asymptotic properties of solutions]
{Asymptotic properties of solutions to linear nonautonomous
 delay differential equations through generalized
 characteristic equations}

\author[C. Cuevas, M. V. S. Frasson \hfil EJDE-2010/95\hfilneg]
{Claudio Cuevas, Miguel V. S. Frasson}  

\address{Claudio Cuevas \newline
Departamento de Matem\'atica,
Universidade Federal de Pernambuco,
Av.\ Prof.\ Luiz Freire, S/N,
50540-740 Recife PE, Brazil}
\email{cch@dmat.ufpe.br}

\address{Miguel V. S. Frasson \newline
Departamento de Matem\'atica Aplicada e Estat\'istica,
  ICMC -- Universidade de S\~ao~Paulo,
  Avenida Trabalhador s\~ao-carlense 400,
  13566-590 S\~ao Carlos SP, Brazil}
\email{frasson@icmc.usp.br}

\thanks{Submitted May 6, 2010. Published July 15, 2010.}
\thanks{C. Cuevas was partially supported by grant 300365/2008-0
from CNPq/Brazil}
\thanks{M. Frasson was partially supported by grant 479747/2008-3
from CNPq/Brazil}

\subjclass[2000]{39B99}

\keywords{Functional differential equations; generalized
  characteristic equation; \hfill\break\indent
  asymptotic behavior}

\begin{abstract}
  We study some properties concerning the asymptotic behavior of
  solutions to nonautonomous retarded functional differential
  equations, depending on the knowledge of certain solutions of the
  associated generalized characteristic equation.
\end{abstract}

\maketitle
\numberwithin{equation}{section}
\newtheorem{theorem}{Theorem}[section]
\newtheorem{example}[theorem]{Example}

\section{Introduction}

We are interested in the study of the asymptotic behavior of solutions
to the linear nonautonomous retarded functional differential equation
(RFDE)
\begin{equation}
  \label{eq:fde}
  x'(t) = L(t) x_t, \quad t\geqslant t_0\in\mathbb{R},
\end{equation}
where $L(t)$ is a family of bounded linear functionals on $\mathcal{C}
= \mathcal{C}([-r,0],\mathbb{C})$, with $r>0$, depending on the
knowledge of certain solutions of the associated generalized
characteristic equation \eqref{eq:gen-chareq}, introduced below.  For
a comprehensive introduction for RFDE see \cite{HVL93}.

By the Riesz representation theorem, for each $t\geqslant t_0$, there
exists a complex valued function of bounded variation $\eta(t,\cdot)$
on $[0,r]$, normalized so that $\eta(t,0)=0$ and $\eta(t,\cdot)$ is
continuous from the right in $(0,r)$ such that
\begin{equation}
  \label{eq:L(t)=int dn(t)}
  L(t) \varphi = \int_0^r d_\theta\eta(t,\theta) \varphi(-\theta).
\end{equation}
Consider the \textit{generalized characteristic equation}
\begin{equation}
  \label{eq:gen-chareq}
  \lambda(t) = \int_0^r d_\theta \eta(t,\theta)
  \exp\Big({-\int_{t-\theta}^t} \lambda(s)ds\Big),
\end{equation}
The solutions of the generalized characteristic equation
\eqref{eq:gen-chareq} are continuous functions $\lambda(\cdot)$ defined in
$[t_0-r,\infty)$ which satisfy \eqref{eq:gen-chareq}.

One obtains the generalized characteristic equation
(\ref{eq:gen-chareq}) by looking for solutions of~\eqref{eq:fde}
the form
\begin{equation}
  \label{eq:x=exp-int-lambda}
  x(t) = \exp\Big[\int_{0}^t \lambda(s)ds\Big].
\end{equation}
For autonomous RFDE, the constant solutions of \eqref{eq:gen-chareq}
are the roots of the so called characteristic equation.

This work is motivated by Dix, Philos and Purnaras
\cite{DixPP-nonaut-rfde-05}.  These authors studied the
asymptotic behavior of solutions of nonautonomous linear function
differential equations with discrete delays
\begin{equation}
  \label{eq:fde-philos}
  x'(t) = a(t)x(t) + \sum_{j=1}^k b_j(t) x(t-\tau_j),\quad t
  \geqslant 0
\end{equation}
where the coefficients $a(\cdot)$ and $b_j(\cdot)$ are continuous real-valued
functions on $[0,\infty)$, $\tau_j>0$ for $j = 1, 2, \dots, k$ by
means of the knowledge of solutions $\lambda(t)$, defined for
$t\geqslant -r$, of the generalized characteristic equation associated
to \eqref{eq:fde-philos}
\begin{equation}
  \label{eq:gen-charec-philos}
  \lambda(t) = a(t) + \sum_{j=1}^k b_j (t)
 \exp\Big[{-\int_{t-\tau_j}^t} \lambda(s)ds \Big], \quad t\geqslant0.
\end{equation}
We also find in \cite{DixPP-nonaut-rfde-05} a description of the
development of results of the type of Theorem~\ref{thm: V<1 =>
  asymptotic}.  We would like to mention results of this type are
found in \cite{estimates-07,large} too.  Dix, Philos
and Purnaras extended their results for neutral functional
differential equations in \cite{DixPP06}.

Theorem~\ref{thm: V<1 => asymptotic} provides a generalization of
\cite[Thm.\ 2.3]{DixPP-nonaut-rfde-05}, as it can be applied for
instance for RFDE with distributed delay or discrete variable delays,
as far as the delays are unifomly bounded.  In fact,
RFDE~\eqref{eq:fde-philos} can be written in the form~\eqref{eq:fde} letting
\[
L(t)\varphi = a(t)\varphi(0) + \sum_{j=1}^k b_j(t) \varphi(\tau_j),
\quad \varphi\in \mathcal{C}.
\]
We acknowledge that Theorem~\ref{thm: V<1 => asymptotic} is obtained
by an adaptation of the proof of \cite[Thm.\
2.3]{DixPP-nonaut-rfde-05} for the more general case of
RFDE~\eqref{eq:fde}, together with ideas from \cite{estimates-07}.
We observe that \cite[Remarks 2.4, 2.5 and
2.6]{DixPP-nonaut-rfde-05} can be restated here for
RFDE~\eqref{eq:fde} without modification.

\section{Results}
\enlargethispage{\baselineskip}
\begin{theorem}\label{thm: V<1 => asymptotic}
Assume that $\lambda(t)$ is a solution of \eqref{eq:gen-chareq}
such that
\begin{equation}
  \label{eq:V<1}
  \limsup_{t\to\infty} \int_0^r \theta
 |e^{-\int_{t-\theta}^t \lambda(s)ds}|
  d_\theta|\eta|(t,\theta) < 1.
\end{equation}
Then for each solution $x$ of \eqref{eq:fde}, we have that
the limit
\begin{equation}
  \label{eq:lim x exp -int lambda = L}
   \lim_{t\to\infty} x(t) e^{-\int_{t_0}^t \lambda(s)ds}
\end{equation}
exists, and
\begin{equation}
  \label{eq:limxexp}
  \lim_{t\to\infty} \Big[ x(t) e^{-\int_{t_0}^t \lambda(s)ds}\Big]' =
  0.
\end{equation}
Furthermore,
\begin{equation}
  \label{eq:limx'exp}
  \lim_{t\to\infty} x'(t) e^{-\int_{t_0}^t \lambda(s)ds} =
  \lim_{t\to\infty} \lambda(t) x(t) e^{-\int_{t_0}^t \lambda(s)ds}
\end{equation}
if there exists the limit in the right hand side of
\eqref{eq:limx'exp}.
\end{theorem}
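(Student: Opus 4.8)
The plan is to pass to the rescaled unknown $y(t) = x(t)\,e^{-\int_{t_0}^t\lambda(s)\,ds}$, so that the three assertions become statements about $y$: \eqref{eq:lim x exp -int lambda = L} asks that $y(t)$ have a finite limit, \eqref{eq:limxexp} that $y'(t)\to 0$, and \eqref{eq:limx'exp} will follow from the algebraic identity $x'(t)e^{-\int_{t_0}^t\lambda(s)ds} = y'(t) + \lambda(t)x(t)e^{-\int_{t_0}^t\lambda(s)ds}$ once $y'(t)\to 0$ is known. First I would substitute $x(t) = y(t)\exp(\int_{t_0}^t\lambda)$ into \eqref{eq:fde}, use the representation \eqref{eq:L(t)=int dn(t)} of $L(t)$, divide through by $\exp(\int_{t_0}^t\lambda)$, and then subtract $\lambda(t)y(t)$ written by means of the generalized characteristic equation \eqref{eq:gen-chareq}. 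After using $y(t-\theta)-y(t) = -\int_{t-\theta}^t y'(s)\,ds$, this telescoping yields the key identity
\[
  y'(t) = -\int_0^r d_\theta\eta(t,\theta)\, e^{-\int_{t-\theta}^t\lambda(s)\,ds}\int_{t-\theta}^t y'(s)\,ds,\qquad t\ge t_0 ,
\]
where beforehand I would record that $y$ is continuously differentiable on $[t_0,\infty)$ by the regularity of solutions of \eqref{eq:fde} and the continuity of $\lambda$.

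Next I would estimate. Bounding $\big|\int_{t-\theta}^t y'\big| \le \theta\sup_{[t-r,t]}|y'|$ and taking moduli in the identity above gives
\[
  |y'(t)| \le \Big(\int_0^r \theta\,\big|e^{-\int_{t-\theta}^t\lambda(s)ds}\big|\,d_\theta|\eta|(t,\theta)\Big)\,\sup_{s\in[t-r,t]}|y'(s)| .
\]
By hypothesis \eqref{eq:V<1} there are $q\in(0,1)$ and $T\ge t_0$ such that the parenthesized factor is $\le q$ for all $t\ge T$; hence $|y'(t)|\le q\sup_{[t-r,t]}|y'|$ for $t\ge T$. Setting $a_n = \sup\{|y'(s)|:\, s\in[T+(n-1)r,\,T+nr]\}$ for $n\ge 0$ and noting that $[t-r,t]$ is contained in the union of the $n$-th and $(n+1)$-st of these intervals when $t$ lies in the $(n+1)$-st, one gets $a_{n+1}\le q\max(a_n,a_{n+1})$, which forces $a_{n+1}\le q\,a_n$ and therefore $a_n\le q^n a_0\to 0$. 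This proves $y'(t)\to 0$, that is, \eqref{eq:limxexp}; moreover $\int_T^\infty|y'(s)|\,ds\le r\sum_{n\ge 1}a_n\le ra_0\,q/(1-q)<\infty$.

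Finally, the summability of $y'$ shows that $y(t)=y(T)+\int_T^t y'(s)\,ds$ has a finite limit as $t\to\infty$, which is exactly \eqref{eq:lim x exp -int lambda = L}; and from the identity $x'(t)e^{-\int_{t_0}^t\lambda(s)ds} = y'(t)+\lambda(t)x(t)e^{-\int_{t_0}^t\lambda(s)ds}$ together with $y'(t)\to 0$, the two quantities in \eqref{eq:limx'exp} differ by $y'(t)\to 0$, so one limit exists if and only if the other does and they coincide. I expect the main obstacle to be the passage from the pointwise contraction inequality $|y'(t)|\le q\sup_{[t-r,t]}|y'|$ to genuine decay and, above all, to \emph{integrability} of $y'$ on $[T,\infty)$ — mere decay of $y'$ would not suffice for the convergence of $y$; the sliding-window geometric argument resolves this, and one must be careful to take $T$ large enough and to use that $y'$ is bounded on the base interval $[T-r,T]$.
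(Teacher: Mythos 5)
Your proposal is correct and follows essentially the same route as the paper: the same substitution $y(t)=x(t)e^{-\int_{t_0}^t\lambda(s)ds}$, the same identity $y'(t)=-\int_0^r d_\theta\eta(t,\theta)\,e^{-\int_{t-\theta}^t\lambda(s)ds}\int_{t-\theta}^t y'(s)\,ds$, and the same geometric decay of $y'$ on successive windows of length $r$, followed by integration to get convergence of $y$. The only difference is how the contraction is propagated from one window to the next: the paper uses a continuity/continuation argument (``$|y'(t^*)|\le A\mu_\lambda<A$, so the bound persists a little longer, repeat finitely many times''), whereas your observation that $a_{n+1}\le q\max(a_n,a_{n+1})$ forces $a_{n+1}\le q\,a_n$ is a slightly cleaner way to reach the same bound $|y'(t)|\le C q^{t/r}$.
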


\begin{proof}
  Hypothesis~\eqref{eq:V<1} implies that there exists $t_1\geqslant
  t_0$ such that
  \[
  \sup_{t\geqslant t_1} \int_0^r \theta
    |e^{-\int_{t-\theta}^t \lambda(s)ds}|
    d_\theta|\eta|(t,\theta) < 1.
  \]
  Hence without loss of generality, if necessary translating the
  initial time to $t_1$, we may assume $t_0=0$ and
  \begin{equation}
    \label{eq:V<1 t0=0}
    \mu_{\lambda} :=
    \sup_{t\geqslant 0} \int_0^r \theta
    |e^{-\int_{t-\theta}^t \lambda(s)ds}|
    d_\theta|\eta|(t,\theta) < 1.
  \end{equation}
  Let $x$ be a solution of \eqref{eq:fde}, and set
  \[
  y(t) = x(t) e^{-\int_{0}^t \lambda(s)ds}, \quad t\geqslant -r.
  \]
  Differentiating $y(t)$ when $t\geqslant0$, using that $x(t)$ is a
  solution of \eqref{eq:fde}, \eqref{eq:gen-chareq} and the
  fundamental theorem of calculus, we obtain
\begin{equation}
  \begin{aligned}
    y'(t)  & = \Big(x'(t) - x(t)\lambda(t)\Big)
    e^{-\int_{0}^t \lambda(s)ds} \\
&= \Big(
      \int_0^r d_\theta\eta(t,\theta) x(t-\theta)
      -x(t) \int_0^r d_\theta \eta(t,\theta)
      e^{-\int_{t-\theta}^t \lambda(s)ds}
    \Big) e^{-\int_{0}^t \lambda(s)ds}  \\
&=
      \int_0^r d_\theta\eta(t,\theta) x(t-\theta)
      e^{-\int_{0}^{t-\theta} \lambda(s)ds}e^{-\int_{t-\theta}^t
        \lambda(s)ds} \\
&\quad
      -x(t)e^{-\int_{0}^t \lambda(s)ds} \int_0^r d_\theta \eta(t,\theta)
      e^{-\int_{t-\theta}^t \lambda(s)ds}
       \\
&=
      \int_0^r d_\theta\eta(t,\theta) y(t-\theta)
      e^{-\int_{t-\theta}^t \lambda(s)ds}
      -y(t) \int_0^r d_\theta \eta(t,\theta)
      e^{-\int_{t-\theta}^t \lambda(s)ds}
       \\
&=
    \int_0^r d_\theta\eta(t,\theta) [y(t-\theta) -y(t)]
    e^{-\int_{t-\theta}^t \lambda(s)ds}
     \\
&=
    -\int_0^r d_\theta\eta(t,\theta) \Big[\int_{t-\theta}^t y'(s) ds\Big]
    e^{-\int_{t-\theta}^t \lambda(s)ds}, \quad t\geqslant
    0. \label{eq:y'}
  \end{aligned}
\end{equation}
  As a characteristic of RFDE, we have that $y'(t)$ is continuous
  for $t\geqslant 0$, understanding the derivative at $t=0$ as the
  derivative from the right.  Let
  \begin{equation}\label{eq:def Mx}
    M_x = \max_{t\in[0,r]} |y'(t)|.
  \end{equation}
  Let $t^*\geqslant r$ arbitrary and suppose that for some $A\geqslant
  0$ we have
  \begin{equation}
    \label{eq:hipotese |y'|<A}
    |y'(t)| \leqslant A,\quad t^*-r\leqslant t\leqslant t^*.
  \end{equation}
  Using \eqref{eq:V<1 t0=0} and \eqref{eq:y'}, we estimate that
\begin{align*}
    |y'(t^*)| &
    \leqslant \Big|\int_0^r d_\theta\eta(t,\theta)
      \Big[\int_{t-\theta}^t y'(s) ds\Big]
      e^{-\int_{t-\theta}^t \lambda(s)ds}\Big|
    \\
& \leqslant \int_0^r d_\theta|\eta|(t,\theta)
      \Big|\int_{t-\theta}^t y'(s) ds\Big|
      \big|e^{-\int_{t-\theta}^t \lambda(s)ds}\big|
    \\
& \leqslant A \int_0^r \theta
    \big|e^{-\int_{t-\theta}^t \lambda(s)ds}\big| d_\theta|\eta|(t,\theta)
    \leqslant A \mu_{\lambda}.
\end{align*}
  Since $|y'(t^*)|\leqslant A\mu_{\lambda}<A$, the continuity of
  $|y'(t)|$ implies that
  \[
  |y'(t)|\leqslant A,\quad t\in[t^*-r,t^*+\delta].
  \]
Reasoning as above, we show that
  \[
  |y'(t)|\leqslant A\mu_{\lambda},\quad t\in[t^*,t^*+\delta].
  \]
  Since $t\mapsto |y'(t)|$ is uniformly continuous on compact
  intervals, we proceed in this way a finite number of steps and
  finally conclude that
  \begin{equation}
    \label{eq:|y'|< A mu}
    |y'(t)|\leqslant A\mu_{\lambda}, \quad t\in[t^*,t^*+r].
  \end{equation}
  Taking $t^*=nr$, $n$ a positive integer, considering
\eqref{eq:def Mx} for $n=1$ and using \eqref{eq:|y'|< A mu} with
  $A=M_x(\mu_{\lambda})^{n-1}$ as induction step, we have proved that
  \begin{equation}
    \label{eq:|y'|< Mx mu^n}
    |y'(t)| \leqslant M_x (\mu_{\lambda})^n,\quad t\geqslant nr.
  \end{equation}
  We observe that \eqref{eq:|y'|< Mx mu^n} allows us to conclude that
  \begin{equation}
    \label{eq:|y'|< Mx mu^(t/r-1)}
    |y'(t)| \leqslant M_x (\mu_{\lambda})^{t/r -1},\quad
    t\geqslant0.
  \end{equation}
  Letting $t\to\infty$, using \eqref{eq:|y'|< Mx mu^(t/r-1)},
 we obtain~\eqref{eq:limxexp}.

  We obtain \eqref{eq:limx'exp} by a straight
  forward application of \eqref{eq:limxexp},
  differentiating the quantity in the limit and doing simple
  computations.

  We proceed to prove~\eqref{eq:lim x exp -int lambda = L}.  The
  cases $M_x=0$ and $\mu_{\lambda}=0$ are simple, where we have
  $y(t)\to x(0)$ and $y(t)\to y(r)$ as $t\to\infty$, respectively.
  For $0<\mu_{\lambda}<1$, for $0\leqslant t\leqslant T$ we obtain
  that
  \begin{align*}
    |y(T)-y(t)| &= \Big|\int_t^T y'(s)ds\Big|\\
& \leqslant
    M_x \int_t^T (\mu_{\lambda})^{s/r-1}  ds \\
&= \frac{M_x r}{\mu_{\lambda}\ln \mu_{\lambda}}
    [(\mu_{\lambda})^{T/r}-(\mu_{\lambda})^{t/r}] \to
    0\quad\text{as } t\to\infty.
  \end{align*}
  By the Cauchy's criterion of convergence, we have that $y(t)\to
  L_x$, for some $L_x$.  This shows~\eqref{eq:lim x exp -int lambda =
    L} and completes the proof.
\end{proof}

\begin{example} \rm
  Consider the linear retarded equation with variable delay
  \begin{equation}
    \label{eq:exampl-variavel}
    x'(t) = \frac{x(t-\tau(t))}{t+c-\tau(t)},\quad t\geqslant t_0.
  \end{equation}
  where $c\in\mathbb{R}$ and $\tau:[0,\infty)\to [0,r]$ is a
  continuous function such that $t+c-\tau(t)>0$ for $ t\geqslant t_0$.
  FDE~\eqref{eq:exampl-variavel} is written in the form~\eqref{eq:fde}
  letting $\eta(t,\cdot)$ be given by $\eta(t,\theta)=0$ for
  $\theta<\tau(t)$, $\eta(t,\theta)=1/(t+c-\tau(t))$ for
  $\theta\geqslant\tau(t)$.  We have that $\theta\mapsto
  \eta(t,\theta)$ is increasing and then $|\eta| = \eta$.

  The generalized characteristic equation associated to
\eqref{eq:exampl-variavel} is given by
  \begin{equation}\label{eq:ex-var-delay-chareq}
    \lambda(t) = \frac{1}{t+c-\tau(t)}
\exp\Big[{-\int_{t-\tau(t)}^t}
      \lambda(s)ds\Big]
  \end{equation}
  and we have that a solution of \eqref{eq:ex-var-delay-chareq} is
  given by
  \begin{equation}\label{eq:ex-var-delay-lambda}
    \lambda(t) = \frac{1}{t+c}.
  \end{equation}
  For \eqref{eq:exampl-variavel} and $\lambda(t)$
in~\eqref{eq:ex-var-delay-lambda}, the left hand side of~\eqref{eq:V<1}
reads as
  \[
  \limsup_{t\to\infty} \int_0^r \theta
  |e^{-\int_{t-\theta}^t \lambda(s)ds}|
  d_\theta|\eta|(t,\theta) =
  \limsup_{t\to\infty} \frac{\tau(t)}{t+c}  =0.
  \]
  and hence the hypothesis~\eqref{eq:V<1} of
  Theorem~\ref{thm: V<1 => asymptotic} is fulfilled and herefore,
  for all solutions
  $x(t)$ of \eqref{eq:exampl-variavel}, we have that
  \begin{equation}\label{eq:ex var delay - resultados teo}
   \lim_{t\to\infty} \frac{x(t)}{t+c}\text{ exists, and }
  \lim_{t\to\infty} \Big[\frac{x(t)}{t+c}\Big]' =0.
  \end{equation}
  Manipulating further the limits in \eqref{eq:ex var delay -
    resultados teo}, we are able to state that
  $x(t) = O(t)$ and $x'(t) =o(t)$ as $t\to\infty$.
\end{example}

\begin{example} \rm
  Consider the linear FDE with distributed delay
  \begin{equation}
    \label{eq:exemplo delay distrib}
    x'(t) = \int_0^1 \frac{x(t-\theta)}{t-\theta},\quad t>1.
  \end{equation}
  We write \eqref{eq:exemplo delay distrib} in the form
\eqref{eq:fde} by setting $\eta(t,\theta) = \ln t - \ln(t-\theta)$ for
  $t>1$ and $\theta\in[0,1]$.  Since $\theta\mapsto \eta(t,\theta)$
  is an increasing function, $|\eta| = \eta$.

  The generalized characteristic equation associated
  to \eqref{eq:exemplo delay distrib} is given by
  \begin{equation}\label{eq:ex var distrib - chareq}
    \lambda(t) = \int_0^1 \frac{1}{t-\theta}
\exp\Big[{-\int_{t-\theta}^t}
      \lambda(s)ds\Big] d\theta
  \end{equation}
  which has a solution given by
  \begin{equation}
    \label{eq:lambda ex distrib}
    \lambda(t) = 1/t.
  \end{equation}
  For this $\lambda(t)$ and for $t>1$, the integral in \eqref{eq:V<1}
 reads as
  \[
  \int_0^1 \frac{\theta}{t-\theta} \exp\Big[{-\int_{t-\theta}^t}
      \frac{ds}{s}\Big] d\theta =
    \int_0^1 \frac{\theta}{t}\, d\theta = \frac{1}{2t} \to 0\quad
    \text{as } t\to\infty.
  \]
  Hence the hypothesis~\eqref{eq:V<1} of Theorem~\ref{thm: V<1 =>
    asymptotic} is fulfilled.  Again we obtain that
  \begin{equation}
    \label{eq:ex var distrib - resultados teo}
    \lim_{t\to\infty} \frac{x(t)}{t}\text{ exists},\quad
    \lim_{t\to\infty} \Big[\frac{x(t)}{t}\Big]' =0\quad
    \text{and}\quad
    \lim_{t\to\infty} \frac{x'(t)}{t} =0.
  \end{equation}
\end{example}




\end{document}